\DeclareMathOperator{\TV}{TV} 
\newcommand{\DE}{\bm{\Delta}}
\newcommand{\p}{\mathbb{P}} 
\newcommand{\Ex}{\mathbb{E}} 
\newtheorem{proposition}{Proposition}
\newtheorem{definition}{Definition}
\newtheorem{theorem}{Theorem}
\newtheorem{lemma}{Lemma}
\newtheorem{corollary}{Corollary}
\definecolor{AfonsoBlue}{RGB}{30,65,123}
\title{Guarantees for Spontaneous Synchronization on Random Geometric Graphs. }
\author{Pedro Abdalla, Afonso S. Bandeira and Clara Invernizzi \\Department of Mathematics, ETH Z\"urich}
\begin{document}

\maketitle

\begin{abstract}
The Kuramoto model is a classical mathematical model in the field of non-linear dynamical systems that describes the evolution of coupled oscillators in a network that may reach a synchronous state. The relationship between the network's topology and whether the oscillators synchronize is a central question in the field of synchronization, and random graphs are often employed as a proxy for complex networks. On the other hand, the random graphs on which the Kuramoto model is rigorously analyzed in the literature are homogeneous models and fail to capture the underlying geometric structure that appears in several examples.

In this work, we leverage tools from random matrix theory, random graphs, and mathematical statistics to prove that the Kuramoto model on a random geometric graph on the sphere synchronizes with probability tending to one as the number of nodes tends to infinity. To the best of our knowledge, this is the first rigorous result for the Kuramoto model on random geometric graphs.

\end{abstract}

\section{Introduction}
Spontaneous synchronization is an ubiquitous phenomenon in nature.  The experimental investigation of this phenomenon dates back to the work of Huygens in 1665, who observed that two pendulums held by the same wooden beam tend to swing in anti-phase after a certain amount of time, regardless of their initial position. The study of the synchronization phenomenon in systems with a large number of coupled oscillators started many years later, following the seminal work of Wiener \cite{wiener1948cybernetics}. He was interested in the rhythms of the brain and believed that other biological mechanisms could be modeled similarly. An example of this phenomenon is when thousands of fireflies start to flash on and off in unison \cite{strogatz1993coupled}. While Wiener's ideas were remarkable, his proposed mathematical models were too complex to analyze. In a seminal work in 1975 \cite{kuramoto1975self}, Kuramoto introduced the first mathematically tractable model to study synchronization in a system of all-to-all coupled phase oscillators: Consider a system of $n$ all-to-all connected phase oscillators represented by the time-varying angles $\theta_1(t),\ldots,\theta_n(t)$, with natural frequencies $\omega_1,\ldots,\omega_n$. The behaviour of the oscillators is described by the following dynamical system 
\begin{equation*}
\frac{d}{dt}\theta_i(t) = \omega_i -\frac{K}{n}\sum_{j=1}^n\sin(\theta_i-\theta_j),
\end{equation*}
where $K>0$ is the coupling strength of the system. The system of oscillators synchronizes when, as time $t$ approaches infinity, each angle $\theta_1(t), \ldots, \theta_n(t)$ converges to the same limit $\theta$ for almost all initializations $\theta_1(0), \ldots, \theta_n(0)$. In the original work, Kuramoto analyzed the dynamical system using non-rigorous techniques from statistical physics and showed that when the natural frequencies are drawn at random, the oscillators achieve synchronization with probability approaching one as $n$ approaches infinity, provided that $K$ is sufficiently large.

Despite the significant progress in mathematically describing the behaviour of coupled oscillators, the original Kuramoto model has a limitation: It assumes that all oscillators interact equally among themselves. However, many complex systems, such as flashing fireflies are better described by complex networks with heterogeneous structures.
Therefore, a generalization of the Kuramoto model that incorporates the topology of the oscillator system is necessary.

To simplify the analysis, in this work, we assume that all oscillators have the same natural frequency, specifically $w_i=w$ for all $i\in [n]$. In this case, by moving into a rotation frame $\theta(t)\rightarrow \theta(t)-wt$ and normalizing the coupling strength to $K=1$, the dynamical system becomes
\begin{equation*}
\frac{d \theta_i(t)}{dt} = \frac{1}{n}\sum_{j=1}^n\sin(\theta_i-\theta_j).
\end{equation*}
Next, consider a graph $G=(V,E)$ with adjacency matrix $A$ with entries $a_{ij} \in \{0,1\}$ that represent the connections in the system. The behaviour of the coupled oscillators associated to the graph $G$ is modeled by the following generalized dynamical system
\begin{equation*}
\frac{d\theta_i(t)}{dt} =  \frac{1}{n}\sum_{i,j=1}^n a_{ij}\sin(\theta_i-\theta_j).
\end{equation*}
Notice that the original Kuramoto model is the particular case when the graph $G$ is the complete graph. 

In a remarkable work, D\"orfle, Chertkov, and Bullo \cite{dorfler2013synchronization,dorfler2014synchronization} considerably simplified the problem by introducing an alternative viewpoint: The coupled oscillators system can be interpreted as a system of particles that attempts to minimize the following energy function
\begin{equation}
\label{eq:energy_function}
    E_G(\theta):= \sum_{i,j}a_{ij}(1-\cos(\theta_i-\theta_j)),
\end{equation}
and the standard Cauchy problem associated with this energy function can be written as
\begin{equation*}
\frac{d\theta_i(t)}{dt} = - \frac{\partial E(\theta_1,\ldots,\theta_n)}{\partial \theta_i}, \quad \theta_i=\theta_i(0).
\end{equation*}
From the equation above, the intuitive meaning of the Kuramoto model is that the evolution moves according to the gradient flow of the energy function, i.e, the direction of maximum decrease of $E_G(\theta)$.
It is easy to see that the global minimum of this energy function is $\theta_1=...=\theta_n = 0$ modulo a global translation, corresponding to the synchronous state.
Mathematically, determining whether the coupled oscillators will synchronize boils down to show whether the energy function $E_{G}(\theta)$ admits a local minimum that is not the global minimum, also known as spurious minimum.

We note that the applications of the Kuramoto model in complex networks are extensive, spanning many branches of science, including power grids, wireless networks, neuroscience, social networks \cite{acebron2005kuramoto,dorfler2013synchronization,rodrigues2016kuramoto,dorfler2014synchronization}. The mathematical problem of understanding how the topology of the graph influences the absence of spurious local minima is an active topic of research in applied dynamical systems \cite{kassabov2022global,abdalla2022expander,taylor2012,ling2019landscape,kassabov2021}. Additionally, characterizing  the optimization landscape of non-convex functions is an active topic as well. Indeed, there is a significant body of work dedicated to studying of the optimization landscape of non-convex functions that often arise in machine learning and data science (e.g.~\cite{ge2016matrix,ge2017optimization}). Some works have directly connected these problems to the study of energy functions similar to the one addressed here~\cite{boumal2016nonconvex,ling2019landscape}.

\subsection{Kuramoto Model on random graphs}
Some very dense graphs do not synchronize. In fact, a line of work \cite{taylor2012,ling2019landscape,kassabov2021} has investigated how large the minimum degree of a deterministic graph needs to be in order to guarantee synchronization regardless of its topology. A trivial necessary condition is that the underlying graph is connected, however it is far from sufficient. A notable result from Kassabov, Strogatz and Townsend \cite{kassabov2021} states that if the graph $G$ has minimum degree at least $0.75(n-1)$, then it must synchronize regardless of the topology of the graph. On the other hand, explicit examples demonstrate that $0.75(n-1)$ cannot be improved below $0.68(n-1)$ \cite{ling2019landscape,taylor2012}. In other words, the graph needs to be quite dense to ensure synchronization. Typically, as it turns out, networks are much sparser, and the topology of the graph $G$ must be taken into account.

On a probabilistic side, it has been observed that many sparse networks synchronize. In particular, consider the well-known Erd\H{o}s-R\'enyi random graph model $G(n,p)$, \cite{erdhos} in which each edge is placed independently with probability $p$. Ling, Xu and the second author \cite{ling2019landscape} showed that if $p = c(\log n) n^{-1/3}$, for a suitable constant $c>32$, a graph sampled from $G(n,p)$ synchronizes with probability approaching to one as the $n$ goes to infinity, i.e, with high probability. They also conjectured that if $p$ is right above the connectivity threshold, $p = (\log n)n^{-1}$, then $G(n,p)$ should synchronize with high probability. This conjecture was confirmed by Kassabov, Souza, Strogatz, Townsend and the first two authors \cite{abdalla2022expander}. The main result here is that connectivity is the bottleneck to ensure synchronization, providing evidence that connected graphs that do not synchronize are somewhat rare. This property does not appear to be exclusively to Erd\H{o}s-R\'enyi graphs; random $d$-regular graphs are globally synchronizing as long as $d\ge 600$. It is worth noting that for $d\ge 3$, a random $d$-regular graph is connected with high probability, but it remains an open question whether it is also globally synchronizing \cite{abdalla2022expander}.

While the Erd\H{o}s-R\'enyi random graphs capture the sparsity of complex networks, they fail to capture heterogeneity and geometry. As argued by Rodrigues, Peron, Ji and Kurths \cite{rodrigues2016kuramoto}, in a social network, two friends $A$ and $B$, are more likely to connect if they are already connected to a common friend, $C$. Similarly, a flashing firefly is more likely to influence nearby fireflies than the ones far away. Motivated by this observation, in this work, we analyze the presence of spurious local minima of the energy function when the graph $G$ is randomly generated with an underlying geometric structure, the so-called random geometric graphs on the sphere. These graphs consist of $n$ vertices sampled by generating $n$ points uniformly at random on the unit sphere, and connecting vertices with an edge if their Euclidean distance is smaller than a specified threshold. Random geometric graphs on the sphere capture the heterogeneous property that two nodes are more likely to connect if both are connected with a third node.

Our main contribution is to provide a rigorous treatment for the random geometric graphs by exploiting their connections with random matrix theory and recent work on hypothesis testing on random graph models \cite{TestHighDim21}. Our main result establishes that a random geometric graph with $n$ nodes drawn uniformly at random from the unit sphere $\mathbb{S}^{d-1}$ synchronizes, with high probability, as long as $d$ is sufficiently large as a function of $n$. We state here a simplified statement, a more detailed version and its proof is presented in Sections \ref{Sec:RGG} and \ref{sec:sublinear}. In what follows, $\Tilde{\Omega}(.)$ hides poly-logarithmic factors.

\begin{theorem}(Simplified Statement of the Main Result Theorem \label{MainThm})
Let $G$ be a random geometric graph with $n$ nodes on the unit sphere $\mathbb{S}^{d-1}$ such that each edge is connected with marginal probability $p$ (see the rigorous Definition \ref{def:RGG_sphere}). If $p$,$n$ and $d$ satisfy at least one of the two regimes
\begin{enumerate}
\item $np=\Omega(\log^2(n))$ and $d=\Tilde{\Omega}(n^2p^2)$.
\item $np = \Omega(\log^{10}(n))$ and $d=\Omega(\log^3 (n))$.
\end{enumerate}
Then the graph $G$ synchronizes with probability tending to one as $n$ goes to infinity.
\end{theorem}

We conclude this section with a discussion of our main result. It shows that if $p=\Tilde{\Theta}\left(\frac{1}{n}\right)$ and $d = \Tilde{\Omega}\left(n^2p^2\right) =\Tilde{\Omega}\left(1\right)$, then the random geometric graph on the sphere $\mathbb{S}^{d-1}$ with marginal connectivity probability $p$ synchronizes with high probability. On the other hand, it is natural to expect that if $p$ grows and the other parameters are fixed, then $G$ becomes more likely to synchronize. If we could formally prove that this monotonicity holds, then our result would imply that for any fixed $p= \Tilde{\Omega}\left(\frac{1}{n}\right)$ and $d = \Tilde{\Omega}\left(1\right)$, the graph synchronizes. This is essentially the best we can hope for (up to logarithmic factors) because below this threshold, the graph will not be connected with non-vanishing probability. An important outcome of our result is that it suggests that connectivity may indeed be the bottleneck for synchronization of random geometric graphs, similar to the Erd\H{o}s-R\'enyi model. However, it is important to note that our current techniques cannot establish global synchrony when the dimension $d$ is small. Analyzing random geometric graphs on the sphere becomes significantly more challenging in this case as they differ significantly from the Erd\H{o}s-R\'enyi model, and we leave it as an open question.

The challenge arises because the proof technique relies on comparisons between the geometric and Erd\H{o}s-R\'enyi random graph models, which impose upper bounds on $p$. While one expects that, on average, larger $p$ aids synchronization (even if it might make a geometric graph more easily distinguishable from an Erd\H{o}s-R\'enyi one), we were not able to establish such a monotonicity result, and we leave it as an open problem. An important difficulty is that adding edges may destroy the synchronization property. In fact, as pointed out in \cite[Remark 1.7]{abdalla2022expander}, it is easy to see that a tree is globally synchronizing, while a cycle of length five is not.

\subsection*{Notation}
The standard unit sphere in $\mathbb{R}^d$ is denoted by $\mathbb{S}^{d-1}$ as usual. The matrix $J_n$ represents the $n\times n$ all ones matrix and $I_n$ denotes the $n\times n$ identity matrix. For vectors $u,v$ of same size, the inner product $\langle u,v\rangle$ corresponds to the standard inner product, and $\|v\|_2$ denotes the standard Euclidean norm. For a matrix A, $\|A\|$ denotes the standard operator norm. We consider the standard "Big-O" notation (Landau notation) and we write $\Tilde{\Omega}(.)$ to hide log factors. An event on a random graph $G$ with $n$ vertices occurs with high probability if the probability of occurrence  converges to one as $n$ goes to infinity.

\section{Preliminaries on the Kuramoto Model}

We will establish spontaneous synchronization of the graphs drawn from our random graph model by showing that they satisfy, with high probability, explicit properties that imply synchronization. One approach, originally developed to analyse the landscape of certain optimization problems arising from the Burer-Monteiro relaxation to semidefinite programming (see~\cite{burer2003nonlinear,burer2005local}), provides a weaker estimate than the one presented in this paper, and is available in an earlier version of our work~\cite{Clara_thesis}.

A second approach, relying on a recent remarkable result by Kassabov, Strogatz and Townsend \cite{kassabov2022global}, provides sharper guarantees. To state their result, we start with the following definition 
\begin{definition}
A graph $G=(V,E)$ with adjacency matrix $A$ is said to be globally synchronizing if all local minima of the associated energy function \eqref{eq:energy_function} correspond to a constant function, $\theta_i =\theta$ for every $i\in [n]$.
\end{definition}
The following theorem establishes a sufficient condition for the graph to synchronize based on the spectral properties of the adjacency matrix and Laplacian matrix of the graph. 

\begin{theorem}\cite[Theorem 6]{kassabov2022global}
\label{thm:kassabovetal}
 Let $G$ be a graph with $n$ vertices and fix $p_o$ such that $0<p_o<1$. Let $A$ and $L$ be the associated adjacency, Laplacian matrix respectively, and define $\DE_A = A - p_oJ_n$, $\DE_L = L + p_oJ_n - np_oI_n$. If the three conditions below are satisfied
    \begin{enumerate}
        \item $\frac{\| \DE_A \|}{np_o} < \frac{1}{12} $,
        \item $\frac{\| \DE_L \|}{np_o} < \frac{1}{4}$,
        \item $\frac{\pi/4}{\sin^{-1} \left(\frac{12\|\DE_A\|}{np_o} \right)} > \frac{\log(n/6)}{\log(\frac{np_o}{2\|\DE_L\|}-1)}+1$,
    \end{enumerate}
then the homogeneous Kuramoto model on the graph $G$ globally synchronizes.
\end{theorem}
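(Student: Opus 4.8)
The plan is to show that $E(\theta)$ has no spurious local minimum, i.e.\ that every stable equilibrium of the gradient flow equals, up to a global phase shift, the in-phase state $\theta_1=\dots=\theta_n$; this is precisely global synchronization. So I would fix a stable equilibrium $\theta^{\ast}$ and, after a global rotation, assume its first order parameter is real and nonnegative, $\rho_1=\frac1n\sum_j e^{\icomplex\theta_j}=r\in[0,1]$. The point of the three hypotheses is that they make $G$ a controlled perturbation of the complete graph: under $A=p_oJ_n+\DE_A$ the equilibrium equations become a mean-field system perturbed by a term of size governed by $\|\DE_A\|$, and under $L=np_oI_n-p_oJ_n+\DE_L$ the stability (Hessian positivity) condition becomes a spectral gap statement governed by $\|\DE_L\|$. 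Since global synchronization already holds for the complete graph — a nontrivial fact, as one must exclude the antipodal and ``twisted'' stable states — the strategy is to run a robust, quantitative version of that argument, with conditions (1)--(3) being exactly what the bookkeeping requires; note that when $\DE_A=\DE_L=0$ they degenerate to ``always true''.

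\emph{Step 1: concentration from the equilibrium equations.} At $\theta^{\ast}$ one has $\sum_j a_{ij}\sin(\theta_i-\theta_j)=0$. Substituting $a_{ij}=p_o+(\DE_A)_{ij}$ and using $\sum_j\sin(\theta_i-\theta_j)=nr\sin\theta_i$ (valid because $\rho_1=r$ is real, so $\sum_j\cos\theta_j=nr$ and $\sum_j\sin\theta_j=0$), this reads $p_o n r\sin\theta_i=-\operatorname{Im}\bigl(e^{\icomplex\theta_i}(\DE_A\bar v)_i\bigr)$ with $v=(e^{\icomplex\theta_j})_j$. Squaring, summing over $i$, and bounding $\|\DE_A\bar v\|_2\le\|\DE_A\|\sqrt n$ gives
\[
\sum_{i=1}^n\sin^2\theta_i\ \le\ \frac{\|\DE_A\|^2}{p_o^2\,n\,r^2}.
\]
By condition (1) the right-hand side is small, which forces all phases to lie near $\{0,\pi\}$ and all but a small fraction to lie near $0$. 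The same computation, localised to oscillators currently known to lie outside an arc of half-width $t$ about $0$, bounds their number by roughly $\|\DE_A\|^2\big/\bigl(p_o^2 n r^2\sin^2 t\bigr)$; this is the workhorse inequality for the iteration below, and condition (1) is what makes $\sin^{-1}(12\|\DE_A\|/(np_o))$ well defined and $<\pi/2$.

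\emph{Step 2: stability, expansion, and peeling.} Stability means the Hessian of $E$ at $\theta^{\ast}$, namely the weighted Laplacian $L(\theta^{\ast})$ with edge weights $a_{ij}\cos(\theta_i-\theta_j)$, is positive semidefinite on $\1^{\perp}$. Testing it against the indicator $\1_S$ of an arbitrary subset $S$ — a critical direction since $\theta^{\ast}$ is a full equilibrium — yields the clean inequality $\sum_{i\in S,\,j\notin S}a_{ij}\cos(\theta_i-\theta_j)\ge 0$. Condition (2) gives $\lambda_2(L)\ge np_o-\|\DE_L\|>\tfrac34 np_o$, hence strong edge expansion $e(S,S^{c})\gtrsim np_o\,|S|$ for $|S|\le n/2$, with $\tfrac{np_o}{2\|\DE_L\|}-1$ — which exceeds $1$ exactly by condition (2) — being the relevant ratio. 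Starting from the crude localisation of Step 1 (all phases in a half-circle, at most $\sim n/6$ outside a small core arc), I would peel iteratively: letting $S_k$ be the oscillators outside an arc of half-width $t_k$ about $0$, the stability inequality (which says $S_k$ cannot be simultaneously nearly disconnected from the synchronised core and have its phases systematically off to one side) together with Step 1 should force $|S_{k+1}|\le|S_k|\big/\bigl(\tfrac{np_o}{2\|\DE_L\|}-1\bigr)$ while the arc only widens by $t_{k+1}-t_k\approx\sin^{-1}\!\bigl(12\|\DE_A\|/(np_o)\bigr)$ per round. After $\bigl\lceil\log(n/6)\big/\log\bigl(\tfrac{np_o}{2\|\DE_L\|}-1\bigr)\bigr\rceil+1$ rounds the outlier set is empty, and condition (3) is precisely the requirement that the total arc used stays below $\pi/4$, keeping every round inside the regime where these estimates are valid.

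\emph{Step 3 and the main difficulty.} Once all phases lie in an arc of length $<\pi$ about $0$, the classical argument closes it: take $\theta_{\max}=\max_i\theta_i$; in its equilibrium equation $\sum_j a_{i_{\max}j}\sin(\theta_{\max}-\theta_j)=0$ every term has $0\le\theta_{\max}-\theta_j<\pi$, hence is $\ge 0$ and so $=0$, forcing $\theta_j=\theta_{\max}$ for each neighbour $j$; connectivity (guaranteed since $\lambda_2(L)>0$ by condition (2)) then spreads equality to all of $[n]$, so $\theta^{\ast}$ is in-phase. I expect Step 2 to be the real obstacle: making the two budgets — $\pi/4$ of arc-width and $\log(n/6)$ of outlier count — balance under the mild constants $1/12$ and $1/4$. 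One must verify that each round truly divides the outlier count by the full factor $\tfrac{np_o}{2\|\DE_L\|}-1$ and costs only $\sin^{-1}(12\|\DE_A\|/(np_o))$ of arc, which requires tightly coupling the equilibrium equations (governed by $\DE_A$) with the stability and expansion estimates (governed by $\DE_L$); controlling the ``interface'' oscillators, for which $\cos(\theta_i-\theta_j)$ is neither near $+1$ nor near $-1$, is the technical crux, and it is this coupling, rather than any single inequality, that pins down the exact shape of condition (3).
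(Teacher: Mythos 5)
First, a point of reference: the paper does not prove this statement. It is quoted verbatim from Kassabov--Strogatz--Townsend \cite{kassabov2022global} and used as a black box, so there is no in-paper proof to compare yours against. Judged on its own terms, your outline does track the broad strategy of the cited argument (order parameters, equilibrium equations controlled by $\|\DE_A\|$, stability and expansion controlled by $\|\DE_L\|$, an iterative arc-narrowing, and a final ``all phases in a half-circle'' step), and your Step 1 algebra and Step 3 closing argument are correct as far as they go.

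As a proof, however, the proposal has two genuine gaps. First, the inequality $\sum_i\sin^2\theta_i\le\|\DE_A\|^2/(p_o^2\,n\,r^2)$ is vacuous unless $r=|\rho_1|$ is bounded away from zero, and nothing in your argument supplies such a lower bound. Stable equilibria with $\rho_1=0$ do exist on some graphs (twisted states on cycles), so the lower bound must itself be extracted from conditions (1)--(2) via the stability (Hessian) condition; this is a separate, necessary lemma that precedes any peeling, and without it ``forces all phases to lie near $\{0,\pi\}$'' does not follow. Second, essentially all of the quantitative content of the theorem lives in the peeling recursion, and there you only assert that stability plus Step 1 ``should force'' $|S_{k+1}|\le|S_k|/\bigl(\tfrac{np_o}{2\|\DE_L\|}-1\bigr)$ at an arc cost of $\sin^{-1}\bigl(12\|\DE_A\|/(np_o)\bigr)$ per round --- you flag this yourself as the crux. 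Since conditions (1)--(3) are calibrated precisely so that this recursion closes within the $\pi/4$ arc budget, a write-up that leaves the recursion unverified has not actually used the hypotheses and cannot be counted as a proof; it is a plausible reconstruction of the plan of \cite{kassabov2022global}, not a derivation of the theorem.
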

In the next section, we prove that these conditions are satisfied for random geometric graphs on the sphere in the regime stated in Theorem \ref{MainThm}.

\section{The Random Geometric Graph on the Sphere}
\label{Sec:RGG}
We start by formally defining the random geometric graph on the sphere. We also include the definition of the Erd\H{o}s-R\'enyi random graph for the sake of completeness.

\begin{definition}[Erd\H{o}s-R\'enyi Random Graph \cite{erdhos}]
Let $n\in \mathbb{N}$ and consider $p$ a fixed probability. The random graph $G(n,p)$ is drawn as follows: For each pair  $\{i,j\} \in [n]\times [n]$ with $i\neq j$, an edge is placed with probability $p$ independently.
\end{definition}
\begin{definition}[Random Geometric Graph on the Sphere \cite{lugosi2017lectures,penrose2003random}]
\label{def:RGG_sphere}
Let $n\in \mathbb{N}$ and $X_1,...,X_n \in \mathbb{S}^{d-1}$ be points drawn independently and uniformly at random on the unit sphere $\mathbb{S}^{d-1}$. The random geometric graph $G(n,p,d)$ is defined as the graph on the vertex set $[n]$ such that the pair $(i,j)$ ($i\neq j$) is an edge of the graph if and only if $\langle X_i,X_j\rangle \geq t_{p,d}$, where $t_{p,d}\in [-1,1]$ is related to $p$ by 
    \begin{equation*}
        p = \p\left(\langle X_i, X_j\rangle \geq t_{p,d}\right).
    \end{equation*}
    Equivalently, $\{i,j\}$ is an edge of the graph if and only if the Euclidean distance between $X_i$ and $X_j$ satisfies $\| X_i- X_j\|_2 \leq \sqrt{2-2t_{p,d}}$.
\end{definition}

A recent fascinating line of work ~\cite{devroye2011high,bubeck2016testing,brennan2020phase,TestHighDim21}  is dedicated to the following question: If we receive one sample from a random graph, either sampled from $G(n,p,d)$ or $G(n,p)$, can we determine with high probability from which graph it was sampled? In other words, is it possible do hypothesis testing with the probability of error going to zero as $n$ goes to infinity?

Clearly, if we cannot distinguish between the $G(n,p)$ and $G(n,p,d)$, then the random graph $G(n,p,d)$ must synchronize with high probability since we already know that the $G(n,p)$ does. Recall that the definition of total variation distance between two random graphs $G$ and $G'$ with set of vertices $V$ is defined by 
\begin{equation*}
    \TV(G,G') = \max_{\mathcal{G}} |\mathbb{P}(G \in \mathcal{G}) - \mathbb{P}(G' \in \mathcal{G}) |,
\end{equation*}
where $\mathcal{G}$ is any set of graphs over $V$. In our context, an important insight from \cite[Theorem 1.2]{TestHighDim21} states that for a fixed constant $\alpha>0$, if $\frac{\alpha}{n}< p < \frac{1}{2}$ and $d = \Tilde{\Omega}(n^3p^2)$, then $\TV\left(G(n,p),G(n,p,d)\right)$ tends to zero as $n$ converges to infinity. Consequently, in this regime of $d$, the random graph $G(n,p,d)$ synchronizes as long as the $G(n,p)$ does.

However, it is unclear whether the random graph $G(n,p,d)$ synchronizes with high probability outside this regime. Our main result shows that it is indeed possible. 

\begin{theorem}(Main Result - First Part) \label{MainThm_1}
Let $G$ be a random graph sampled from $G(n,p,d)$. For any constant $\alpha > 0$, there are absolute constants $C_p,C_d>0$ such that if $d\ge C_d(n^2p^2+\log^4(n))\log^4(n)$, $\frac{\alpha}{n} < p < \frac{1}{2}$ and $np\ge C_p \log(n)^2$, then, with probability at least $1-n^{-\Omega(1)}$, the graph $G$ synchronizes.
\end{theorem}

Our proof strategy is to verify that the three conditions from Theorem \ref{thm:kassabovetal} are satisfied for $G(n,p,d)$ in the regime of Theorem \ref{MainThm_1}. We start by collecting a few preliminary results about random graphs. The first one is a standard fact, but we could not locate the exact statement in the literature, so we include here a short proof.
\begin{lemma}
\label{lemma:G(n,p)-A}
Let $A$ be the adjacency matrix of a graph sampled from the  $G(n,p)$. Suppose that $np>\log(n)$. Then, there exist absolute constants $K,c>0$ such that, with probability at least $1-e^{-t^2/c}$,
\begin{equation*}
\| A - \mathbb{E}A \| \le K\sqrt{np} + t.
\end{equation*}
\end{lemma}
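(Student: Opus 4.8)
I would split the argument into two parts: a bound on $\mathbb{E}\|A-\mathbb{E}A\|$, and a concentration estimate for $\|A-\mathbb{E}A\|$ around its mean; combining them and adjusting absolute constants gives the statement.

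Write $M:=A-\mathbb{E}A$. Since $A$ is the adjacency matrix of a simple graph, $M$ is a symmetric $n\times n$ matrix with deterministic diagonal whose entries $(M_{ij})_{i<j}$ above the diagonal are independent, centered, bounded by $1$ in absolute value, and of variance $p(1-p)\le p$. Consequently the two parameters controlling the sharp operator-norm bound of Bandeira and Van Handel for random symmetric matrices with independent entries satisfy $\sigma^{2}:=\max_{i}\sum_{j}\mathbb{E}M_{ij}^{2}\le(n-1)p\le np$ and $\sigma_{*}:=\max_{i,j}\|M_{ij}\|_{\infty}\le 1$, whence $\mathbb{E}\|M\|\le C_{0}\bigl(\sqrt{np}+\sqrt{\log n}\bigr)$ for an absolute constant $C_{0}$. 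The hypothesis $np>\log n$ then gives $\mathbb{E}\|M\|\le 2C_{0}\sqrt{np}$.

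For the fluctuations, view $\|M\|$ as a function $g$ of the independent random variables $(a_{ij})_{1\le i<j\le n}\in[0,1]^{\binom{n}{2}}$. This $g$ is convex, being the composition of a norm with an affine map, and it is $\sqrt{2}$-Lipschitz for the Euclidean metric, since perturbing the argument by a vector $z$ changes $M$ by the symmetric matrix with off-diagonal part $z$, whose spectral norm is at most its Frobenius norm $\sqrt{2}\,\|z\|_{2}$. Talagrand's concentration inequality for convex Lipschitz functions of independent bounded random variables then yields $\mathbb{P}\bigl(g\ge\mathbb{E}g+t\bigr)\le C_{1}e^{-c_{1}t^{2}}$ for absolute constants $C_{1},c_{1}$ (using that the median and mean of $g$ differ by at most an absolute constant, which itself follows by integrating the two-sided tail bound). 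Together with the expectation estimate this gives $\mathbb{P}\bigl(\|M\|\ge 2C_{0}\sqrt{np}+t\bigr)\le C_{1}e^{-c_{1}t^{2}}$, and a routine adjustment of constants — writing $K\sqrt{np}+t=2C_{0}\sqrt{np}+s$ with $s\ge t$ and $s^{2}\ge t^{2}+(K-2C_{0})^{2}\log n$, and taking $K$ large enough to absorb the prefactor — produces absolute constants $K,c$ with $\mathbb{P}\bigl(\|A-\mathbb{E}A\|\le K\sqrt{np}+t\bigr)\ge 1-e^{-t^{2}/c}$, as desired.

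The only step that is not entirely routine is the expectation bound, where one needs the operator norm to be of the optimal order $\sqrt{np}$ rather than $\sqrt{np\log n}$ in the sparse regime $np\asymp\log n$; this is exactly why I would invoke the refined random-matrix estimates rather than the elementary $\varepsilon$-net-plus-Bernstein argument, which loses a $\sqrt{\log n}$ factor. The concentration step is standard.
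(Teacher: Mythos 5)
Your proposal is correct, and its overall skeleton (bound the expectation of $\|A-\mathbb{E}A\|$, then concentrate around the mean via Talagrand's convex-Lipschitz inequality) is exactly the paper's. The one place you diverge is the key lemma used for the expectation step: the paper invokes Seginer's theorem, which bounds $\mathbb{E}\|A-\mathbb{E}A\|$ by a constant times the expected maximum Euclidean column norm, and then uses the degree concentration implied by $np>\log n$ to show that this maximum is $O(\sqrt{np})$; you instead invoke the Bandeira--Van Handel bound $\mathbb{E}\|M\|\lesssim \sigma+\sigma_*\sqrt{\log n}$ with $\sigma\le\sqrt{np}$ and $\sigma_*\le 1$, so the hypothesis $np>\log n$ enters directly to absorb the $\sqrt{\log n}$ term. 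Both routes give the optimal order $\sqrt{np}$ in the regime $np\gtrsim\log n$; yours makes the role of the hypothesis slightly more transparent, while the paper's avoids needing the variance/uniform-bound dichotomy and reduces to an elementary computation on column norms. Your concentration step (convexity, $\sqrt{2}$-Lipschitzness in the upper-triangular entries, median-to-mean passage, and the constant adjustment using $s\ge t+(K-2C_0)\sqrt{\log n}$ to absorb the prefactor) matches the paper's intent and is carried out in more detail than the paper's one-line sketch.
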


\begin{proof}
By Theorem 2.1 in \cite{seginer2000expected}, we know that $\mathbb{E}\|A-\mathbb{E}A\|$ is at most a constant times the expectation of the maximum between the Euclidean norms of the columns. Based on this fact, an easy computation gives that $\mathbb{E}\|A-\mathbb{E}A\| \le C\sqrt{np}$. To obtain the tail bound, we apply Talagrand convex Lipschitz concentration inequality \cite[Theorem 5.2.16]{vershynin2018high}.
\end{proof}
\noindent The second preliminary proposition that we need is the following.

\begin{proposition}\cite[Proposition 1.3]{TestHighDim21}
\label{Prop1.3}
For any constant $\alpha> 0 $, there exist constants $C_d,C_\varepsilon >0$ such that if $\frac{\alpha}{n}\leq p\leq \frac{1}{2}$ and $d\geq C_d (n^2p^2+\log^4n)\log^4n$, then, for any $\varepsilon \geq C_\varepsilon \sqrt{\frac{1}{d}(np+\log n)\log^4n}$, there exists a probability space such that  $G_{-} \sim G(n,p(1-\varepsilon))$, $G\sim G(n,p,d)$ and $G_{+}\sim G(n,p(1+\varepsilon))$ satisfy, with probability at least $1-n^{-\Omega(\log n)}$, $G_{-} \subseteq G \subseteq G_{+}$.
\end{proposition}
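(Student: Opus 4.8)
The plan is to build all three graphs on one probability space through a sequential edge-revelation coupling, and to reduce the sandwich inclusion to a single quantitative estimate on conditional edge probabilities. I would realize the latent points via the Gaussian representation $X_i = g_i/\|g_i\|_2$ with $g_i \sim \mathcal{N}(0,I_d)$ i.i.d., so that $G \sim G(n,p,d)$ with $\{i,j\}$ an edge iff $\langle X_i, X_j\rangle \ge t_{p,d}$. Reveal the vertices in the order $1,\dots,n$; when adding $X_k$, first expose the backward edges $\{1,k\},\dots,\{k-1,k\}$ one at a time, and only afterwards expose the exact position of $X_k$ (which is needed to define future edges). Writing $\mathcal{F}_{k,i}$ for the $\sigma$-algebra generated by $X_1,\dots,X_{k-1}$ together with the indicators of the edges $\{j,k\}$ for $j<i$, set
\[
q_{k,i} := \p\big(\langle X_i, X_k\rangle \ge t_{p,d} \,\big|\, \mathcal{F}_{k,i}\big).
\]
Attaching to each pair $\{i,k\}$ an independent $U_{k,i}\sim\mathrm{Unif}[0,1]$, I declare $\{i,k\}$ to be an edge of $G$ iff $U_{k,i}\le q_{k,i}$, of $G_-$ iff $U_{k,i}\le p(1-\varepsilon)$, and of $G_+$ iff $U_{k,i}\le p(1+\varepsilon)$. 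By the chain rule this reproduces the law of $G(n,p,d)$ exactly, while $G_-,G_+$ are genuine Erdős–Rényi graphs since the thresholds are deterministic and the $U_{k,i}$ i.i.d.; moreover $G_-\subseteq G\subseteq G_+$ holds on the event that $p(1-\varepsilon)\le q_{k,i}\le p(1+\varepsilon)$ for every pair. Since widening $\varepsilon$ only enlarges this window, it suffices to establish the claim at the threshold value $\varepsilon = C_\varepsilon\sqrt{\tfrac1d(np+\log n)\log^4 n}$.

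Second, I would isolate a regularity event $\mathcal{E}$ under which the exposed points are well spread: $\big|\|g_i\|_2^2 - d\big|\lesssim\sqrt{d\log n}$ for all $i$, and $|\langle X_i,X_j\rangle|\lesssim\sqrt{(\log n)/d}$ for all $i\ne j$. Both follow from standard chi-square and spherical concentration plus a union bound, giving $\p(\mathcal{E}^c)\le n^{-\Omega(\log n)}$ in the stated regime for $d$. On $\mathcal{E}$ the Gram matrix of any subset of the points is within $\mathrm{polylog}(n)\sqrt{n/d}$ of the identity in operator norm, which is $o(1)$ because $d\gtrsim n^2p^2\,\mathrm{polylog}(n)\gg n$.

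The analytic heart, and the step I expect to be the main obstacle, is controlling $q_{k,i}$ on $\mathcal{E}$. Conditioning on $X_1,\dots,X_{k-1}$, the inner product $\langle X_i,X_k\rangle$ is a function only of the projection of $g_k$ onto $\Span(X_1,\dots,X_{k-1})$, and the vector $\big(\langle X_1,g_k\rangle,\dots,\langle X_{k-1},g_k\rangle\big)$ is centered Gaussian with covariance equal to the Gram matrix of the exposed points, hence on $\mathcal{E}$ a small perturbation of i.i.d.\ standard Gaussians. After normalizing by $\|g_k\|_2\approx\sqrt d$, the revealed edges become thresholdings at level $\approx t_{p,d}\sqrt d$ of these nearly independent Gaussians, and $q_{k,i}$ is the conditional probability that one further such Gaussian exceeds the same threshold. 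I would then show that conditioning on the previously revealed thresholds moves this probability from its unconditional value $p$ by at most a multiplicative factor $1\pm\varepsilon$: each revealed edge $\{j,k\}$ contributes a perturbation controlled by the correlation $\langle X_i,X_j\rangle = O(\sqrt{(\log n)/d})$, with the dominant contribution from the $\approx np$ edges actually present. The delicate point is that these sign-revealing conditionings on correlated Gaussians must not compound multiplicatively across the sequential revelation; a careful accounting should show the accumulation scales like the square root of the number of present edges, which is what produces the threshold $\varepsilon\gtrsim\sqrt{\tfrac1d(np+\log n)\log^4 n}$ — the $np$ term reflecting the typical degree and the $\log n$ term the union bound over the $O(n^2)$ pairs.

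Finally, intersecting $\mathcal{E}$ with the event that this accumulated perturbation stays below $\varepsilon p$ uniformly over all pairs yields, by the union bound, an event of probability at least $1-n^{-\Omega(\log n)}$ on which $p(1-\varepsilon)\le q_{k,i}\le p(1+\varepsilon)$ for every $\{i,k\}$. On this event the monotone coupling of the first paragraph gives $G_-\subseteq G\subseteq G_+$ with the correct marginals, which completes the argument.
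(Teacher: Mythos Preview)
This proposition is not proved in the present paper; it is quoted verbatim from \cite{TestHighDim21} (Liu--Mohanty--Schramm--Yang) and used as a black box in the proof of Lemma~\ref{lemma:First_Condition}. There is therefore no proof in this paper against which to compare your proposal.

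For what it is worth, your plan is in the spirit of the original argument in \cite{TestHighDim21}: a monotone coupling built by sequential edge revelation, with the sandwich $G_-\subseteq G\subseteq G_+$ reduced to a uniform multiplicative control on the conditional edge probabilities. You have correctly identified the analytic heart --- showing that revealing $O(np)$ previous threshold events on weakly correlated Gaussians perturbs the next conditional probability by at most $\varepsilon p$ --- and your heuristic for why the scale $\sqrt{(np+\log n)(\log^4 n)/d}$ emerges is sound. That said, your write-up leaves this step at the level of ``a careful accounting should show''; in \cite{TestHighDim21} this is exactly where the technical work is concentrated, and it requires a genuine quantitative argument (tracking how each revealed bit shifts the posterior on $X_k$ and controlling the accumulation), not just the observation that the correlations are individually small. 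As a roadmap your proposal is accurate, but it is a sketch of the proof rather than the proof itself.
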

For simplicity, we may assume that both random graphs have self-loops, i.e, every node may be connected with itself. We remark that this assumption does not change the energy function of the Kuramoto model. Next, we isolate the main technical proof in the following lemma that verifies the first condition of Theorem \ref{thm:kassabovetal}.

\begin{lemma}
\label{lemma:First_Condition}
Let $A$ be the adjacency matrix of a graph sampled from $G(n,p,d)$. There exists absolute constants $K,C_{\varepsilon},C_d>0$ such that if $np>\log(n)^2$ and $d\ge C_d(n^2p^2+\log^4(n))\log^4(n)$, then, with probability at least $1-n^{-\Omega(1)}$,
\begin{equation*}
\frac{\| \DE_A \|}{np} < \sqrt{\frac{2}{np}}\left(K + \max\left(4\frac{C_{\varepsilon}}{\sqrt{C_d}},24\right)\right).
\end{equation*}
\end{lemma}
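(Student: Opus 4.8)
The plan is to control $\DE_A = A - pJ_n$ by sandwiching the geometric graph between two Erd\H{o}s--R\'enyi graphs via Proposition~\ref{Prop1.3} and then estimating each piece with the concentration bound of Lemma~\ref{lemma:G(n,p)-A}. Write $K_0$ for the absolute constant of Lemma~\ref{lemma:G(n,p)-A}; the constant $K$ in the statement will be a fixed multiple of $K_0$ plus an additive constant, and I suppress throughout a deterministic diagonal term of operator norm $O(1)$ coming from the self-loop convention (harmless, since $\sqrt{2np}\ge\sqrt 2\log n\gg 1$). The first step is to apply Proposition~\ref{Prop1.3} (with, say, $\alpha=1$: note $np>\log^2 n$ forces $p>1/n$, and $p<1/2$ is part of the ambient setting of Theorem~\ref{MainThm}) with $\varepsilon$ taken to be its smallest admissible value $\varepsilon := C_\varepsilon\sqrt{\tfrac1d(np+\log n)\log^4 n}$. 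Using $d\ge C_d(n^2p^2+\log^4 n)\log^4 n\ge C_d n^2p^2\log^4 n$ and $\log n\le np$, a short computation gives
\begin{equation*}
np\,\varepsilon \;=\; C_\varepsilon\sqrt{\frac{(np)^2(np+\log n)\log^4 n}{d}} \;\le\; \frac{C_\varepsilon}{\sqrt{C_d}}\sqrt{np+\log n}\;\le\;\frac{C_\varepsilon}{\sqrt{C_d}}\sqrt{2np},
\end{equation*}
and in particular $\varepsilon\le\frac{C_\varepsilon}{\sqrt{C_d}}\sqrt{2/np}\ll 1$. On the probability space produced by Proposition~\ref{Prop1.3} we have, with probability $1-n^{-\Omega(\log n)}$, adjacency matrices $A_-\preceq A\preceq A_+$ (entrywise) with $A_\pm$ distributed as the adjacency matrix of $G(n,p_\pm)$, $p_\pm:=p(1\pm\varepsilon)$, so that $\mathbb{E}A_\pm = p_\pm J_n$.

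Next I would use the decomposition $\|A-pJ_n\|\le\|A_- - pJ_n\|+\|A-A_-\|$. The first term splits as $\|A_- - pJ_n\|\le\|A_- - \mathbb{E}A_-\| + \|\mathbb{E}A_- - pJ_n\| = \|A_- - \mathbb{E}A_-\| + np\varepsilon$. For the second term, the key observation is that $A-A_-\ge 0$ and $A_+-A_-\ge A-A_-$ entrywise, and both $A-A_-$ and $A_+-A_-$ are symmetric with nonnegative entries; since for a symmetric matrix $M$ with nonnegative entries one has $\|M\| = \max_{x\ge 0,\,\|x\|_2=1}\langle x, Mx\rangle$ (the maximising eigenvector may be taken nonnegative by Perron--Frobenius), entrywise domination gives $\|A-A_-\|\le\|A_+-A_-\|$, and then
\begin{equation*}
\|A_+-A_-\| \;\le\; \|A_+-\mathbb{E}A_+\| + \|\mathbb{E}A_+-\mathbb{E}A_-\| + \|A_--\mathbb{E}A_-\| \;=\; \|A_+-\mathbb{E}A_+\| + 2np\varepsilon + \|A_--\mathbb{E}A_-\|.
\end{equation*}
The point of routing everything through $\mathbb{E}A_\pm = p_\pm J_n$ is that concentration (Lemma~\ref{lemma:G(n,p)-A}) is only ever applied to the Erd\H{o}s--R\'enyi matrices $A_\pm$, for which $np_\pm\ge np(1-\varepsilon)>\log n$: one cannot apply it to $A$ itself (dependent entries) nor to the difference $A_+-A_-$ (whose worst-case spectral norm over couplings with the nesting property is much larger than $\sqrt{np}$). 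This is why the hypothesis $d = \tilde{\Omega}(n^2p^2)$ is exactly what is needed: it forces the ``mismatch'' terms $\|\mathbb{E}A_\pm - pJ_n\| = O(np\varepsilon)$ down to the size of the fluctuations.

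To finish, intersect the coupling event with the two events $\{\|A_\pm-\mathbb{E}A_\pm\|\le K_0\sqrt{np_\pm}+t\}$ supplied by Lemma~\ref{lemma:G(n,p)-A} (a union bound with failure probability $\le n^{-\Omega(\log n)} + 2e^{-t^2/c}$). Using $\sqrt{np_\pm}\le\sqrt 2\sqrt{np}$ (as $\varepsilon<1$) and combining the estimates above,
\begin{equation*}
\|A-pJ_n\| \;\le\; (2+\sqrt 2)K_0\sqrt{np} + 3t + 3np\varepsilon + O(1).
\end{equation*}
Taking $t=\sqrt{np}$ keeps $e^{-t^2/c}\le e^{-\log^2 n/c} = n^{-\Omega(\log n)}$, so the bad event has probability $n^{-\Omega(1)}$; rewriting everything over $\sqrt{2np}$, absorbing $(2+\sqrt2)K_0\sqrt{np}+3\sqrt{np}+O(1)\le K\sqrt{2np}$ into a suitable absolute constant $K$, bounding $3np\varepsilon\le 3\tfrac{C_\varepsilon}{\sqrt{C_d}}\sqrt{2np}\le\max\!\bigl(4\tfrac{C_\varepsilon}{\sqrt{C_d}},24\bigr)\sqrt{2np}$, and dividing by $np$ yields the claim. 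The main obstacle — really the whole content of the lemma — is the middle step: organising the triangle inequalities so that concentration is only ever invoked for the independent-entry models $A_\pm$, and checking that the $O(np\varepsilon)$ slack this creates is tamed precisely by the assumed lower bound on $d$.
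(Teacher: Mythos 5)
Your proof is correct, and while it shares the paper's overall skeleton (couple $G$ between two Erd\H{o}s--R\'enyi graphs via Proposition~\ref{Prop1.3}, then apply the concentration bound of Lemma~\ref{lemma:G(n,p)-A} only to the independent-entry models), the way you exploit the sandwich $A_-\le A\le A_+$ is genuinely different. The paper converts the containment into monotonicity of the Laplacian quadratic forms, writes $v^T(A-\mathbb{E}A)v = v^TDv - v^TLv - pv^TJ_nv$, and arrives at $\|A-\mathbb{E}A\|\le\|D^+-D^-\|+\max\left(\|A^--\mathbb{E}A^-\|,\|A^+-\mathbb{E}A^+\|\right)$; the degree term $\|D^+-D^-\|$ is then controlled by a separate Chernoff bound and a union bound over the $n$ diagonal entries, yielding $4p\varepsilon n$ with high probability. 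You instead bound $\|A-A_-\|\le\|A_+-A_-\|$ directly via Perron--Frobenius monotonicity of the spectral norm on entrywise-dominated nonnegative symmetric matrices (a correct and standard fact), after which the remaining ``mismatch'' $\|\mathbb{E}A_+-\mathbb{E}A_-\|=2np\varepsilon$ is deterministic. This removes one probabilistic estimate from the argument and is why you can take $\varepsilon$ at its minimal admissible value, whereas the paper must enforce $\varepsilon\ge 6\sqrt{2/np}$ (the $\max(\cdot,6)$ in its definition of $\varepsilon$) so that the Chernoff exponent $p\varepsilon n/3$ beats the union bound. Both routes produce an error term of order $np\varepsilon=O(\sqrt{np})$ and hence the same final bound up to the values of the absolute constants, which is all that is used downstream in the proof of Theorem~\ref{MainThm}.
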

\begin{proof}
We choose $C_{\varepsilon},C_d>0$ to be the constants in Proposition \ref{Prop1.3} and $K>0$ as in Lemma \ref{lemma:G(n,p)-A}. We fix
\begin{equation}
\label{eqEpsilon}
  \varepsilon:= \max\left(\frac{C_{\varepsilon}}{\sqrt{C_d}},6\right)\sqrt{\frac{2}{np}} \ge C_\varepsilon \sqrt{\frac{1}{d}(np+\log n)\log^4n} .
\end{equation}
The inequality above follows easily from the choice of $d$ and $np$. The necessity of the constant $6$ will become clear in what follows. Notice that the hypothesis of Proposition \ref{Prop1.3} is satisfied, so if $G^- \coloneqq G(n,p(1-\varepsilon))$ and $G^+ \coloneqq G(n,p(1+\varepsilon))$ are Erd\H{o}s-R\'enyi random graphs, then there exists a probability space such that $G^- \subseteq G \subseteq G^+$ with the required probability. Now we define $L \coloneqq D - A$, the Laplacian matrix associated to $G$, and $L^- \coloneqq D^- - A^-$, $L^+ \coloneqq D^+ - A^+$ the Laplacian matrices of $G^-$ and $G^+$ respectively.
Recall that the quadratic form of a Laplacian matrix is given by $ \frac{1}{2}\sum_{\{ i,j\} \in E(G)} (v_i - v_j)^2$. Hence the fact that $G^- \subseteq G \subseteq G^+$ implies the following monotone relation: For all vectors $v \in \mathbb{R}^n$, we have that
\begin{equation}\label{Sandwitch}
        v^T L^- v \leq v^T L v \leq v^T L^+ v,
    \end{equation}
therefore, 
    \begin{equation*}
        \begin{aligned}
            v^T(A - \Ex A)v 
            &= v^T A v - pv^T J_n v 
            = v^T D v - v^T L v - pv^T J_n v \\
            &\leq v^T D^+ v - v^T L^- v - pv^T J_n v + v^T \Ex[A^-] v -v^T \Ex[A^-] v \\
            &= v^T(D^+ - D^-) v + v^T(A^- - \Ex[A^-]) v - p\varepsilon v^T J_n v \\
            &\leq v^T(D^+ - D^-) v + v^T(A^- - \Ex[A^-]) v,
        \end{aligned}
    \end{equation*}
where we used here the self-loop assumption, $\Ex A = p J_n$ and $\Ex[A^-] = p(1-\varepsilon) J_n$. In the second line we use property \eqref{Sandwitch} and the fact that $G \subseteq G^+$ implies that $D_{ii} \leq D_{ii}^+$, for every $i\in [n]$. The last line then follows because $v^T J_n v \geq 0$ for every $v \in \mathbb{R}^n$.
    In an analogous way, we obtain that
    \begin{equation*}
        v^T(A - \Ex A)v 
        \geq v^T(D^- - D^+) v + v^T(A^+ - \Ex[A^+]) v.
    \end{equation*}
Putting everything together, we obtain
    \begin{equation} \label{bound}
        \begin{aligned}
            \| A -\Ex A \| 
            &= \sup_{\|v \|_2 = 1} |v^T (A-\Ex A) v | \\
            &\leq \|D^+ - D^- \| + \max \left\{\|A^- -\Ex[A^-]\|, \|A^+ -\Ex[A^+]\|\right\}.
        \end{aligned}
    \end{equation}
By Lemma \ref{lemma:G(n,p)-A}, we obtain that
    \begin{equation*}
        \begin{aligned}
            \|A^--\Ex[A^-]\| &\leq K\sqrt{np(1-\varepsilon)}, \\
            \|A^+-\Ex[A^+]\| &\leq K\sqrt{np(1+\varepsilon)},
        \end{aligned}
    \end{equation*}
    with the desired probability, where $K>0$ is the constant in Lemma \ref{lemma:G(n,p)-A}. For the first term in the right hand side of \eqref{bound}, we claim that $\|D^+ - D^- \| \leq 4p\varepsilon n$ with the desired probability. Indeed, each entry in the diagonal matrix $D^+-D^-$ is the sum of $n$ independent Bernoulli random variables:
    \begin{equation*}
        (D^+-D^-)_{ii} = \sum_{j= 1}^n X_{ij} \quad \text{for}\quad X_{ij} =
        \begin{cases}
            1 \ \text{if} \ A^+_{ij} = 1 \ \text{and} \ A^-_{ij} = 0,\\
            0 \ \text{else},
        \end{cases}
    \end{equation*}
    and
    \begin{equation*}
        \p(X_{ij} = 1) = 1-(\p(A^+_{ij} = 0) + \p(A^-_{ij} = 1)) = 1 - (1-p(1+\varepsilon) + p(1-\varepsilon)) = 2p\varepsilon.
    \end{equation*}
    The expectation of each entry of $D^+ - D^-$ is  $\Ex[(D^+-D^-)_{ii}] = 2p\varepsilon n$. We now apply Chernoff's deviation inequality \cite{vershynin2018high,janson2011random} to get
    \begin{equation*}
        \p((D^+-D^-)_{ii} \geq 4p\varepsilon n) \leq \exp\left(- \frac{p}{3}\varepsilon n\right).
    \end{equation*}
    By union bound, definition of $\varepsilon$ in \eqref{eqEpsilon} and the fact that $np > \log^2(n)$, we obtain that the claim holds with probability at least
    \begin{equation*}
        1-\exp\left(-\frac{p\varepsilon n}{3}+\log n\right) \ge 1-\exp\left(-2\sqrt{2} \log n+\log n\right),
    \end{equation*}
and then the above probability converges to one even if $n=2$. Now, for the range of $d$ in the hypothesis of this lemma, the following holds with the desired probability,
    \begin{equation*} 
    \begin{split}
        \frac{\| \DE_A\|}{np} &\leq K \frac{\max(\sqrt{1-\varepsilon},\sqrt{1+\varepsilon})}{\sqrt{np}} + \frac{4p\varepsilon n}{np} \\
        &\le K \sqrt{\frac{2}{np}} + 4\varepsilon.
     \end{split}
    \end{equation*}
Recall the choice of $\varepsilon$ in the beginning to conclude the proof.
\end{proof}
\noindent We now present the proof of the main theorem.
\begin{proof}[Proof of Theorem \ref{MainThm}]
As explained above, we verify that the three conditions listed in Theorem \ref{thm:kassabovetal} are satisfied. Thanks to Lemma \ref{lemma:First_Condition}, we know that $\frac{\| \DE_A\|}{np} < \frac{1}{12}$ with the required probability. To verify the second condition, we apply triangular inequality to obtain
\begin{equation*}
\| \DE_L \| = \| D- A -\Ex D + \Ex A \| \leq \|D-\Ex D \| + \| A - \Ex A \| < \|D-\Ex D \| + \frac{1}{12}.
\end{equation*}
To deal with the first term in the right hand side, observe that the entries of $D-\Ex D $ are only nonzero on the diagonal and $ (D-\Ex D)_{ii} = \sum_{j=1}^n \mathds{1}_{\langle X_i,X_j\rangle \ge t_{p,d}}-p$. Conditioned on $X_i$, the distribution of $(D-\Ex D)_{ii}$ is a centered binomial random variable and we apply Chernoff's deviation again to obtain, for every $\delta >0$,
 \begin{equation*}
    \p \left( \left|(D-\Ex D)_{ii}\right| \geq \delta np\right)
        \leq 2 \exp \left(-\frac{\delta^2 np}{3}\right).
    \end{equation*}
We choose $\delta = \frac{\log n}{\sqrt{np}}$ and union bound over all $i\in [n]$ to obtain that, with the required probability,
\begin{equation*}
\frac{\| \DE_L \|}{np} < \frac{\log n \sqrt{np}}{np} + \frac{1}{12} \le \frac{1}{\sqrt{C_p}} + \frac{1}{12} < \frac{1}{8},
\end{equation*}
for a sufficiently large $C_p>0$. The reason for $\frac{1}{8}$ instead of $\frac{1}{4}$ will become clear in what follows. Next, we proceed to verify the last condition. It is easy to see that for a sufficiently large $C_p>0$, we have
\begin{equation*}
\frac{12\|\DE_A\|}{np} \le \frac{1}{4\log n}.
\end{equation*}
Since the function $\sin^{-1}(x)$ is non-increasing and satisfies $\sin^{-1}(x) < 2x$ for $x \geq 0$, we obtain that
\begin{equation}
\label{LHSBound}
        \sin^{-1}\left(\frac{12\|\DE_A\|}{np}\right) \leq \frac{1}{2\log n}.
\end{equation}
On the other hand, we know that $\frac{np}{2\|\DE_L\|} > 4$ and then 
\begin{equation}
\label{RHSBound}
\log\left(\frac{np}{2\|\DE_L\|} -1\right) > \log(3).
\end{equation}
Combining \eqref{LHSBound} with \eqref{RHSBound}, we get 
\begin{equation*}
\frac{\pi/4}{\sin^{-1}\left(\frac{12\|\DE_A\|}{np} \right)} \ge \frac{\pi}{2}\log n > \log\left(\frac{n}{6}-3\right) + 1 >  \frac{\log(n/6)}{\log (\frac{np}{2\|\DE_L\|}-1)} + 1.
\end{equation*}
We conclude the proof by applying Theorem \ref{thm:kassabovetal}.
\end{proof}

\section{Improvement when $p$ is sublinear.}
\label{sec:sublinear}
The downside of Theorem \ref{MainThm_1} is that it requires larger values of $d$ as $p$ increases. To fix this issue, we use another result in the literature that provides sharper bounds when $np=\Omega(\log(n)^8)$. The main result of this section covers the second regime in Theorem \ref{MainThm}.
\begin{theorem}(Main Result - Second Part)
Let $G$ be a random graph sampled from $G(n,p,d)$. There exists an absolute constant $C_d>0$ and $C_n>0$ such that if $d=C_d\log^{3}(n)$ and $np=C_n\log^{10}(n)$, then with high probability, the graph $G$ synchronizes. 
\end{theorem}
\noindent We highlight that the same result holds for any range of $p$ as long as $p=o(1)$. The startpoint of our analysis is the following result.
\begin{proposition}[Theorem 1.7 \cite{liu2022local}]
Let $G(n,p,d)$ be a random geometric graph such that $\mathbb{P}(\langle X_i,X_j\rangle \ge \tau(p,d))=p$. Then with high probability,
\begin{equation*}
    \frac{\|\DE A\|}{np} \le \max\left\{(1+\varepsilon(d,\tau))\tau,\frac{\log^4(n)}{\sqrt{np}}\right\}.
\end{equation*}
Here $\varepsilon(d,\tau)$ is a function that goes to zero as $d\tau(p,d)^2$ goes to infinity.
\end{proposition}
The main application of the previous proposition is to prove the following corollary.
\begin{corollary}
Let $G(n,p,d)$ be a random geometric graph such that $np\ge C_n\log^{10}(n)$ with $C_n>2304$. There exists an absolute constant $C_d>0$ such that, for $d=C_d\log^3(n)$, with high probability,
\begin{equation*}
    \frac{\|\DE A\|}{np} < \frac{1}{48\log n}<\frac{1}{12}.
\end{equation*}
\end{corollary}
Once the corollary is proved, the proof of the main result follows exactly the same steps as before and therefore it will be omitted. The following lemma collects a few key facts about random geometric graphs, the proof can be found in \cite{brieden2001deterministic}. 
\begin{lemma}[Equations (3.5) and (3.6) in \cite{lugosi2017lectures}]
\label{distribution_lemma}
Let $G(n,p,d)$ be a random geometric graph. Assume that $p <1/2$ is fixed, then
\begin{equation*}
    \lim_{d\rightarrow \infty} \tau \sqrt{d} = \Phi^{-1}(1-p).
\end{equation*}
Here $\Phi(x)$ is the cumulative distribution function of a standard Gaussian random variable. In particular, for $p$ small enough, $\tau\sqrt{d}\ge 2$. Moreover, for this regime,
\begin{equation*}
    \frac{1}{6\sqrt{d}\tau}(1-\tau^2)^{(d-1)/2} \le p \le \frac{1}{2\sqrt{d}\tau}(1-\tau^2)^{(d-1)/2}.
\end{equation*}
\end{lemma}

\begin{proof}
By assumption, $\log^4(n)/\sqrt{np}$ is smaller than $1/(48\log n)$. We claim that, for the choice of $p$ and $d$, we have $\tau <1/(48\log n)$ and $d\tau^2$ goes to infinity. Suppose by contradiction that $\tau$ is larger than $1/(48\log n)$, then 
\begin{equation*}
   p\le \frac{1}{2\sqrt{d}\tau}e^{-\tau^2(d-1)/2} \le \frac{24}{\sqrt{C_d\log n}}e^{-(C_d-1)\log n / 4608} \le n^{-(C_d-1)/4608} .
\end{equation*}
Notice that $p \ge C_n\log^{10}(n)/n = n^{-1+o(1)}$. This gives the contradiction if we choose a sufficiently large constant $C_d>0$. To get the second conclusion, we assume again by contradiction that $d\tau^2$ converges to some limit $L <\infty$. By lemma \ref{distribution_lemma}, if $p$ is a constant, then $\tau$ is of order $1/\sqrt{d}$. Since $p$ vanishes and $p$ is inversely proportional to $\tau$, we can assume that for $n$ large enough, it holds that $\tau \ge 2/\sqrt{d}$. Therefore, it holds that 
\begin{equation*}
    \frac{1}{6\sqrt{d\tau^2}}\left(1-\frac{\tau^2d}{d}\right)^{(d-1)/2}\le p.
\end{equation*}
It is easy to see that the left hand side has positive limit, but the right hand side converges to zero by assumption. We conclude that $d\tau^2$ goes to infinity and the claim is proved.
\end{proof}

\section*{Acknowledgment}
We would like to thank Alex Townsend and Tselil Schramm for insightful discussions. In particular, we are indebted to Tselil Schramm for various insights and clarifications with regards to properties of random geometric graphs.


\end{document}